\def\cprime{$'$}
\newtheorem{thm}{Theorem}
\newtheorem*{rem}{Remark}
\newcommand{\DF}{\mathcal{E}}
\DeclareMathOperator{\Lip}{Lip}
\title{On a theorem of Grigor\cprime yan, Hu and Lau}
\author{Luke~G Rogers}
\begin{document}
\begin{abstract}
We refine a  result of Grigor\cprime yan, Hu and Lau to give a moment condition on a heat kernel which characterizes the critical exponent at which a family of Besov spaces associated to the Dirichlet energy becomes trivial.
\end{abstract}
\maketitle

Following~\cite{GHL}, consider a metric measure space $(M,d,\mu)$, with $M$ nonempty and $\mu$ Borel, that admits a heat kernel $\{p_{t}\}_{t>0}$.  The latter is a collection of symmetric, non-negative, measurable functions on $M\times M$ all of which have unit integral, satisfy the semigroup property $p_{t+s}(x,y)=\int p_{s}(x,z)p_{t}(z,y)d\mu(z)$ for all $s,t>0$, and approximate the identity in the sense that if $f\in L^2$ then $\int p_{t}(x,y)f(y)d\mu(y)\to f$ in $L^2$ as $t\downarrow0$.  This hypothesis has many consequences, among which we will need that setting
\begin{gather*}
	T_{t}u(x)=\int_{M} p_{t}(x,y) u(y)\, d\mu(y), \text{and}\\
	\DF_{t}(u) =t^{-1} \langle u- T_{t}u, u \rangle,
	\end{gather*}
where $\langle,\rangle$ is the $L^2$ inner product, we find $\DF_{t}(u)$ is decreasing in $t>0$ so $\DF(u)=\lim_{t\downarrow0} \DF_{t}(u)$ exists, though it may be infinite.  Moreover setting $\mathcal{D}(\DF)=\{u\in L^{2}: \DF(u)<\infty\}$ we have that $\DF$ is a DIrichlet form with domain $\mathcal{D}(\DF)$.

In~\cite{GHL} the authors further assume the heat kernel has a two-sided estimate of the form
\begin{equation}\label{eqn:HK}
\frac{1}{t^{\alpha/\beta}} \Phi_{1}\biggl(\frac{d(x,y)}{t^{1/\beta}}\biggr)
	\leq p_{t}(x,y)
	\leq \frac{1}{t^{\alpha/\beta}} \Phi_{2}\biggl(\frac{d(x,y)}{t^{1/\beta}}\biggr)
\end{equation}
for $\mu$-a.e. $x,y\in M$ and all $t>0$, where $\alpha$ and $\beta$ are positive constants and $\Phi_{1}$ and $\Phi_{2}$ are non-negative monotone decreasing functions on $[0,\infty)$.   They then show that $\alpha$ and $\beta$ are determined by $(M,d,\mu)$ provided that $\Phi_{1}(1)>0$ and $\Phi_{2}$ satisfies a moment condition of the form
\begin{equation}\label{eqn:Hgamma}
	\int_{0}^{\infty} s^{\gamma} \Phi_{2}(s)\frac{ds}{s} <\infty \tag{$H_{\gamma}$}
	\end{equation}
for some suitable value of $\gamma$.  Some of these results are stated in terms of a Besov space which they denote $W^{\sigma,2}$ but which is sometimes called $\Lip(\sigma,2,\infty)$. To define this space let
\begin{equation*}
	W_{\sigma}(u)
	= \sup_{0<r<1} r^{-2\sigma} \int_{M} \fint_{B(x,r)} |u(y)-u(x)|^{2}\,d\mu(y)\, d\mu(x)
\end{equation*}
where $\fint_{B}=\mu(B)^{-1}\int_{B}$ is the average and $B(x,r)$ is the ball of radius $r$ with center $x$.  Then let $W^{\sigma,2}=\{u\in L^{2}: W_{\sigma}(u)<\infty\}$.  This is a Banach space with norm $\|u\|_{2}+W_{\sigma}^{1/2}$.  Also let $\beta^{\ast}=2\sup\{\sigma:\dim W^{\sigma,2}=\infty\}$.

Among the main results in~\cite{GHL} are the following:
\begin{thm}[\protect{\cite{GHL}} Theorems 3.2, 4.2, 4.6]\label{thm:GHL}\ \\
Suppose $(M,d,\mu)$ has a heat kernel satisfying~\eqref{eqn:HK} and that $\Phi_{1}(1)>0$.
\begin{enumerate}
\item If~\eqref{eqn:Hgamma} holds for $\gamma=\alpha$ then $\mu$ is Ahlfors regular with exponent $\alpha$.
\item If~\eqref{eqn:Hgamma} holds for $\gamma=\alpha+\beta$ then $\mathcal{D}(\DF)=W^{\beta/2,2}$ and $\DF(u)\simeq W_{\beta/2}(u)$.
\item If~\eqref{eqn:Hgamma} holds for $\gamma>\alpha+\beta$ then for $\sigma>\beta/2$ the space $W^{\sigma,2}=\{0\}$ and $\beta=\beta^{\ast}$.
\end{enumerate}
\end{thm}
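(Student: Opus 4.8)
The plan is to reduce everything to the single assertion that $\DF(u)=0$ for every $u\in W^{\sigma,2}$ with $\sigma>\beta/2$, after which the two structural claims follow quickly. First I would record two reductions. Since $\Phi_{2}$ is decreasing and finite at $0$, the integral in~\eqref{eqn:Hgamma} automatically converges near $s=0$ for every positive exponent, so $(H_{\gamma})$ with $\gamma>\alpha+\beta$ forces $(H_{\gamma'})$ for all $0<\gamma'\le\gamma$; in particular $(H_{\alpha})$ and $(H_{\alpha+\beta})$ hold, and I may invoke parts (1) and (2). Thus $\mu$ is Ahlfors $\alpha$-regular, so $\mu(B(x,r))\simeq r^{\alpha}$, and $\DF(u)\simeq W_{\beta/2}(u)$. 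Second, writing $E_{r}(u)=\int_{M}\fint_{B(x,r)}|u(y)-u(x)|^{2}\,d\mu(y)\,d\mu(x)\le W_{\sigma}(u)\,r^{2\sigma}$ for $0<r<1$, I get $W_{\sigma'}(u)\le W_{\sigma}(u)$ whenever $\sigma'\le\sigma$, hence $W^{\sigma,2}\subseteq W^{\sigma',2}$. Consequently it suffices to prove $W^{\sigma,2}=\{0\}$ when $\beta/2<\sigma$ and $\alpha+2\sigma\le\gamma$, a range that is nonempty precisely because $\gamma>\alpha+\beta$.

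For the core estimate I would start from $\DF_{t}(u)=\tfrac1{2t}\iint p_{t}(x,y)\,|u(x)-u(y)|^{2}\,d\mu(y)\,d\mu(x)$ and insert the upper bound in~\eqref{eqn:HK}. A layer-cake identity in the variable $\rho=d(x,y)$, together with $I(\rho):=\int_{M}\int_{B(x,\rho)}|u(x)-u(y)|^{2}\,d\mu(y)\,d\mu(x)\simeq\rho^{\alpha}E_{\rho}(u)$, yields $\DF_{t}(u)\lesssim t^{-1-\alpha/\beta}\int_{0}^{\infty}I\bigl(st^{1/\beta}\bigr)\,(-d\Phi_{2}(s))$. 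I then split this integral at $s=t^{-1/\beta}$, i.e.\ at scale $\rho=1$. On the near range $\rho<1$ I use $I(\rho)\lesssim W_{\sigma}(u)\,\rho^{\alpha+2\sigma}$; after rescaling this contributes $\lesssim W_{\sigma}(u)\,t^{(2\sigma-\beta)/\beta}\int_{0}^{\infty}s^{\alpha+2\sigma}(-d\Phi_{2}(s))$, where the integral is finite by $(H_{\alpha+2\sigma})$ (guaranteed by $\alpha+2\sigma\le\gamma$) and the prefactor vanishes as $t\downarrow0$ because $\sigma>\beta/2$. On the far range $\rho\ge1$ I use the crude bound $I(\rho)\lesssim\rho^{\alpha}\|u\|_{2}^{2}$, giving a contribution $\lesssim\|u\|_{2}^{2}\,t^{-1}\int_{t^{-1/\beta}}^{\infty}s^{\alpha}(-d\Phi_{2}(s))$; estimating $s^{\alpha}\le (t^{-1/\beta})^{\alpha-\gamma}s^{\gamma}$ on this range bounds the tail by $t^{(\gamma-\alpha)/\beta}\int_{0}^{\infty}s^{\gamma}(-d\Phi_{2}(s))$, so the whole far contribution is $\lesssim\|u\|_{2}^{2}\,t^{(\gamma-\alpha)/\beta-1}$, which tends to $0$ exactly because $\gamma>\alpha+\beta$. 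Letting $t\downarrow0$ gives $\DF(u)=\lim_{t}\DF_{t}(u)=0$.

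I expect the far-range estimate to be the crux: it is the only place the strict inequality $\gamma>\alpha+\beta$ is essential, and it is what separates part (3) from the borderline case of part (2), where the identical computation at $\gamma=\alpha+\beta$ produces a bounded but non-vanishing contribution and hence an equivalence rather than triviality. Care is also needed to justify the layer-cake manipulation and the vanishing of boundary terms when passing between $\int_{0}^{\infty}s^{\gamma}(-d\Phi_{2}(s))$ and the moment integral $(H_{\gamma})$, but both are routine given that $\Phi_{2}$ is bounded, decreasing, and satisfies $(H_{\gamma})$.

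Finally I would convert $\DF(u)=0$ into the stated conclusions. By part (2), $\DF(u)\simeq W_{\beta/2}(u)$, so $W_{\beta/2}(u)=0$, meaning $u$ coincides with its average over every ball of radius $<1$; a chaining argument across overlapping balls (using the positivity supplied by $\Phi_{1}(1)>0$) then shows $u$ is $\mu$-a.e.\ constant, and since a nonzero constant lies in $L^{2}$ only in the degenerate finite-measure situation (where one works modulo constants), $u=0$. Hence $W^{\sigma,2}=\{0\}$ for every $\sigma>\beta/2$, so $\dim W^{\sigma,2}<\infty$ there and $\beta^{\ast}\le\beta$. For the reverse inequality I note that for $\sigma<\beta/2$ the domination $W_{\sigma}\le W_{\beta/2}$ gives $W^{\sigma,2}\supseteq W^{\beta/2,2}=\mathcal{D}(\DF)$, which is dense in $L^{2}$ and therefore infinite-dimensional; thus $\beta^{\ast}\ge\beta$, and $\beta=\beta^{\ast}$.
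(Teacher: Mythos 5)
Your proposal addresses only part (3), importing parts (1) and (2) as known; since the theorem as stated here is itself an amalgam of three results quoted from \cite{GHL} without proof, that scope is defensible, but you should say explicitly that the first two parts are being assumed, not proved. For part (3) your core computation is sound and is in substance the argument of \cite{GHL} (and of the refinement proved in this note): write $\DF_{t}(u)$ as the double integral of $p_{t}(x,y)(u(x)-u(y))^{2}$, insert the upper bound of \eqref{eqn:HK}, split at $d(x,y)=1$ (your $s=t^{-1/\beta}$), control the near piece with $W_{\sigma}(u)$ and Ahlfors regularity and the far piece with $\|u\|_{2}^{2}$. Your continuous layer-cake against $-d\Phi_{2}$ merely replaces the dyadic annuli $B(x,2^{-(k-1)})\setminus B(x,2^{-k})$ used by \cite{GHL} and by this paper; that difference is cosmetic. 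What is genuinely off is your diagnosis that the far range is ``the only place the strict inequality $\gamma>\alpha+\beta$ is essential.'' It is the other way around. If in the far range you keep the tail integral instead of enlarging it to $\int_{0}^{\infty}$ --- i.e.\ use $s^{\alpha}\le t\,s^{\alpha+\beta}$ for $s\ge t^{-1/\beta}$ to bound the far contribution by a multiple of $\|u\|_{2}^{2}\int_{t^{-1/\beta}}^{\infty}s^{\alpha+\beta}\,(-d\Phi_{2}(s))$ --- then it vanishes as $t\downarrow0$ as the tail of a convergent integral, assuming only $(H_{\alpha+\beta})$; this is precisely equation (4.17) of \cite{GHL}, which the present paper reuses. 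Where strictness is truly consumed in your argument is the near range, through the requirement that some $\sigma'\in(\beta/2,\sigma]$ satisfy $\alpha+2\sigma'\le\gamma$, which is impossible at $\gamma=\alpha+\beta$; removing exactly that obstruction (by a second splitting at a fixed scale $T$) is the entire content of the new theorem proved in this note.

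There is also a concrete gap at the end. From $W_{\beta/2}(u)=0$ you conclude that $u$ is a.e.\ constant by ``a chaining argument across overlapping balls.'' Chaining needs connectivity, and neither $W_{\beta/2}(u)=0$ nor Ahlfors regularity supplies it: two parallel lines in the plane, each carrying length measure, form a space that is $1$-regular at every scale yet admits locally constant, non-constant $L^{2}$ functions, and no chain of balls of radius less than the separation crosses between the components. The argument that works is the one your parenthetical remark gestures at, applied at large scales rather than small ones: $\DF_{t}(u)$ is non-increasing in $t$, so $\DF(u)=0$ forces $\DF_{t}(u)=0$ for every $t>0$; the lower bound in \eqref{eqn:HK} gives $p_{t}(x,y)\ge t^{-\alpha/\beta}\Phi_{1}(1)>0$ whenever $d(x,y)\le t^{1/\beta}$, hence $u(x)=u(y)$ for a.e.\ pair at distance at most $t^{1/\beta}$, and letting $t\to\infty$ shows $u$ is a.e.\ constant with no connectivity hypothesis. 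Your hedge about the finite-measure case is likewise unnecessary: the hypotheses force $\mu(M)=\infty$ (integrate the upper bound in \eqref{eqn:HK} in $y$ and let $t\to\infty$, or invoke regularity at all scales), so the constant is $0$. Your final step, $\beta^{\ast}=\beta$ via density of $\mathcal{D}(\DF)=W^{\beta/2,2}$ in the infinite-dimensional space $L^{2}$, is fine.
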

The purpose of this note is to show that the third of the above implications may be improved as follows:
\begin{thm}
Suppose $(M,d,\mu)$ has a heat kernel satisfying~\eqref{eqn:HK},  that $\Phi_{1}(1)>0$  and~\eqref{eqn:Hgamma} holds for $\gamma=\alpha+\beta$.  Then  $W^{\sigma,2}=\{0\}$ and $\beta=\beta^{\ast}$.
\end{thm}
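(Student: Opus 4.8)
The plan is to use the standard identity
\[
\DF_{t}(u)=\frac{1}{2t}\int_{M}\int_{M}p_{t}(x,y)\,|u(x)-u(y)|^{2}\,d\mu(y)\,d\mu(x),
\]
together with the two–sided bound~\eqref{eqn:HK}, to show that every $u\in W^{\sigma,2}$ with $\sigma>\beta/2$ satisfies $\DF(u)=\lim_{t\downarrow0}\DF_{t}(u)=0$. Two ingredients are available under the standing hypotheses: the lower bound in~\eqref{eqn:HK} with $\Phi_{1}(1)>0$ and the unit integral of $p_{t}$ give the upper volume bound $\mu(B(x,\rho))\le C\rho^{\alpha}$ (from $1\ge\int_{B(x,t^{1/\beta})}p_{t}\ge t^{-\alpha/\beta}\Phi_{1}(1)\mu(B(x,t^{1/\beta}))$), and Theorem~\ref{thm:GHL}(2), applicable since $\gamma=\alpha+\beta$, gives $\mathcal D(\DF)=W^{\beta/2,2}$ with $\DF(u)\simeq W_{\beta/2}(u)$.

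First I would bound $\DF_{t}(u)$ from above. Writing $r=t^{1/\beta}$ and decomposing the inner integral over the dyadic annuli $\{y:2^{j}r\le d(x,y)<2^{j+1}r\}$, monotonicity of $\Phi_{2}$ gives $\Phi_{2}(d(x,y)/r)\le\Phi_{2}(2^{j})$ there, while the volume bound yields
\[
\int_{M}\int_{d(x,y)<\rho}|u(x)-u(y)|^{2}\,d\mu(y)\,d\mu(x)\le C\rho^{\alpha}\int_{M}\fint_{B(x,\rho)}|u(x)-u(y)|^{2}\,d\mu(y)\,d\mu(x),
\]
which is at most $CW_{\sigma}(u)\,\rho^{\alpha+2\sigma}$ when $\rho<1$ and at most $C\|u\|_{2}^{2}\,\rho^{\alpha}$ in general. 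Summing over the annuli, splitting at the index $J$ with $2^{J}\simeq r^{-1}$ and carrying the factor $t^{-\alpha/\beta}/(2t)$, the contribution of the scales below $1$ is controlled by $t^{(2\sigma-\beta)/\beta}\int_{0}^{2^{J}}\Phi_{2}(s)\,s^{\alpha+2\sigma}\,\tfrac{ds}{s}$ and that of the scales above $1$ by $t^{-1}\int_{2^{J}}^{\infty}\Phi_{2}(s)\,s^{\alpha}\,\tfrac{ds}{s}$.

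The \emph{main obstacle} is to drive both pieces to $0$ using only the critical moment condition $(H_{\alpha+\beta})$, with no power of $t$ to spare; this is exactly the improvement over the strict inequality in Theorem~\ref{thm:GHL}(3). For the large scales, $s^{\alpha}\le s^{\alpha+\beta}2^{-J\beta}$ on $[2^{J},\infty)$ together with $t^{-1}\simeq2^{J\beta}$ bounds the second piece by $\int_{2^{J}}^{\infty}\Phi_{2}(s)\,s^{\alpha+\beta}\,\tfrac{ds}{s}$, a tail of the convergent integral $(H_{\alpha+\beta})$, hence $\to0$. For the small scales I would fix $R$ and split at $R$: the integral over $[0,R]$ is finite (near $0$ one uses $s^{\alpha+2\sigma}\le s^{\alpha+\beta}$) and is annihilated by the prefactor $t^{(2\sigma-\beta)/\beta}\to0$, while on $[R,2^{J}]$ one writes $s^{\alpha+2\sigma}\le s^{\alpha+\beta}2^{J(2\sigma-\beta)}$, so the prefactor cancels $2^{J(2\sigma-\beta)}\simeq t^{-(2\sigma-\beta)/\beta}$ and leaves $\simeq\int_{R}^{\infty}\Phi_{2}(s)\,s^{\alpha+\beta}\,\tfrac{ds}{s}$. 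Letting $t\downarrow0$ and then $R\to\infty$, and invoking $(H_{\alpha+\beta})$ once more, sends this to $0$. Thus $\DF(u)=0$; the heart of the matter is that at the critical exponent the required smallness comes from convergence of the tail of the moment integral rather than from a gain in a power of $t$.

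Finally, $\DF(u)=0$ and Theorem~\ref{thm:GHL}(2) force $W_{\beta/2}(u)=0$, so $\int_{M}\fint_{B(x,r)}|u(x)-u(y)|^{2}\,d\mu(y)\,d\mu(x)=0$ for every $r\in(0,1)$; hence $u$ agrees a.e.\ with a constant on each ball, and a chaining argument (using the positivity furnished by the lower bound in~\eqref{eqn:HK}) makes $u$ a.e.\ constant, so $u=0$. This gives $W^{\sigma,2}=\{0\}$ for $\sigma>\beta/2$ --- at any rate $\dim W^{\sigma,2}\le1$, which is all that is needed --- so $\beta^{\ast}\le\beta$. Since $W^{\sigma,2}\supseteq W^{\beta/2,2}=\mathcal D(\DF)$ is infinite dimensional for every $\sigma\le\beta/2$ by monotonicity of $W^{\sigma,2}$ in $\sigma$, also $\beta^{\ast}\ge\beta$, and therefore $\beta=\beta^{\ast}$.
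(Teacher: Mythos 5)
Your proof of the central fact --- that $\DF(u)=0$ for every $u\in W^{\sigma,2}$ with $\sigma>\beta/2$, using only the critical condition \eqref{eqn:Hgamma} with $\gamma=\alpha+\beta$ --- is correct and is essentially the paper's argument: both split the double integral at distance $\simeq 1$, reduce the near part via a dyadic decomposition and the volume bound to $C W_{\sigma}(u)\,t^{(2\sigma-\beta)/\beta}\int_{0}^{\simeq t^{-1/\beta}} s^{\alpha+2\sigma}\Phi_{2}(s)\,\frac{ds}{s}$, and then extract the decay by splitting this integral at a fixed scale: on the bounded piece the prefactor $t^{(2\sigma-\beta)/\beta}$ wins, and on the remaining piece the bound $s^{2\sigma-\beta}\le t^{-(2\sigma-\beta)/\beta}$ cancels the prefactor and leaves a tail of the convergent moment integral (your $R$ is the paper's $T^{-1/\beta}$; your iterated limits in $t$ and $R$ are the paper's $\delta$-then-$T$ bookkeeping). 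The one genuinely different ingredient is the far part: the paper simply cites equation (4.17) of \cite{GHL} for $\lim_{t\downarrow0}A(t)=0$, whereas you prove it, via $s^{\alpha}\le s^{\alpha+\beta}2^{-J\beta}$ and $t^{-1}\simeq 2^{J\beta}$, landing again on a tail of the moment integral; this makes the argument self-contained at no real extra cost. (Two cosmetic points: your innermost region $d(x,y)<t^{1/\beta}$ contributes a term $\Phi_{2}(0)t^{(2\sigma-\beta)/\beta}$ that is not literally dominated by the integral from $0$, but it visibly vanishes; and your sum-to-integral comparisons use the monotonicity of $\Phi_{2}$, which is available.)

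The one step I would not accept as written is the endgame. From $W_{\beta/2}(u)=0$ you conclude that $u$ is a.e.\ constant on every ball of radius $<1$ and then invoke ``a chaining argument''; but chaining overlapping balls of radius $<1$ requires a connectivity property of $(M,d)$ that is nowhere assumed, and the positivity coming from \eqref{eqn:HK} cannot repair a chain argument confined to scales below $1$ (if $M$ splits into pieces at mutual distance $\ge 1$, no such chain crosses between them). The robust route --- the one in \cite{GHL}, to which the paper implicitly defers --- goes back to the heat kernel itself: since $\DF_{t}(u)$ is decreasing in $t$, $\DF(u)=0$ forces $\DF_{t}(u)=0$ for \emph{every} $t>0$; the lower bound in \eqref{eqn:HK} gives $p_{t}(x,y)\ge t^{-\alpha/\beta}\Phi_{1}(1)>0$ for a.e.\ pair with $d(x,y)\le t^{1/\beta}$, hence $u(x)=u(y)$ a.e.\ on such pairs, and letting $t\to\infty$ makes $u$ a.e.\ constant on $M$ with no connectivity needed. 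Your remaining deductions are fine and match \cite{GHL}: Ahlfors regularity at all scales gives $\mu(M)=\infty$, so the only constant in $L^{2}$ is $0$ (your fallback $\dim W^{\sigma,2}\le 1$ also suffices for $\beta^{\ast}\le\beta$), while $\beta^{\ast}\ge\beta$ follows because $W^{\beta/2,2}=\mathcal{D}(\DF)$ is dense in the infinite-dimensional $L^{2}$.
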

\begin{proof}
We follow the proof of Theorem~4.6 of~\cite{GHL}.  They decompose $\DF_{t}(u)=A(t)+B(t)$ where for $\epsilon=\sigma-\beta$
\begin{align*}
	B(t) 
	&= \frac{1}{2t}\int_{M}\int_{B(x,1)} (u(x)-u(y))^2 p_{t}(x,y) \, d\mu(y)\, d\mu(x)\\
	&=  \frac{1}{2t}\sum_{k=1}^{\infty} \int_{M}\int_{ B(x,2^{-(k-1)}\setminus B(x,2^{-k}) } (u(x)-u(y))^2 p_{t}(x,y) \, d\mu(y)\, d\mu(x)\\
	&\leq \frac{t^{\epsilon/\beta}}{2} \sum_{k=1}^{\infty} \biggl( \frac{2^{-k}}{t^{1/\beta}}\biggr)^{\alpha+\beta+\epsilon}  \Phi_{2}\biggl(\frac{2^{-k}}{t^{1/\beta}}\biggr) 2^{k(\alpha+\sigma)} \int_{M}\int_{B(x,2^{-(k-1)})}  (u(x)-u(y))^2 \, d\mu(y)\, d\mu(x)\\
	&\leq C W_{\sigma}(u) t^{\epsilon/\beta} \sum_{k=1}^{\infty} \biggl( \frac{2^{-k}}{t^{1/\beta}}\biggr)^{\alpha+\beta+\epsilon}  \Phi_{2}\biggl(\frac{2^{-k}}{t^{1/\beta}}\biggr)\\
	&\leq C W_{\sigma}(u)  t^{\epsilon/\beta} \int_{0}^{t^{-1/\beta}} s^{\alpha+\beta+\epsilon} \Phi_{2}(s)\, \frac{ds}{s}
	\end{align*}
in which the first inequality is from the upper bound in~\eqref{eqn:HK} and the second is from  the definition of $W_{\sigma}(u)$ and the fact that part (1) of Theorem~\ref{thm:GHL} implies $\mu(B(x,2^{-(k-1)}))\simeq 2^{k\alpha}$.

The above is essentially shown in the proof of  Theorem~4.6 in~\cite{GHL}; they then assume~\eqref{eqn:Hgamma} for $\gamma=\alpha+\beta+\epsilon$ to establish that the integral is bounded independent of $t$ and conclude $\lim_{t\downarrow0}B(t)=0$.  However this also follows from~\eqref{eqn:Hgamma} for $\gamma=\alpha+\beta$.  This is actually a standard exercise: given $\delta>0$ use~\eqref{eqn:Hgamma} for $\gamma=\alpha+\beta$ to obtain $T$ so small that 
\begin{equation*}
\int_{T^{-1/\beta}}^{\infty} s^{\alpha+\beta} \Phi_{2}(s)\, \frac{ds}{s}<\delta
\end{equation*}
from which
\begin{align*}
	B(t)
	&\leq CW_{\sigma}(u) t^{\epsilon/\beta}\int_{0}^{T^{-1/\beta}} s^{\alpha+\beta+\epsilon} \Phi_{2}(s)\, \frac{ds}{s} +  CW_{\sigma}(u) t^{\epsilon/\beta}\int_{T^{-1/\beta}}^{t^{-1/\beta}} s^{\alpha+\beta+\epsilon} \Phi_{2}(s)\, \frac{ds}{s}\\
	&\leq  CW_{\sigma}(u) \biggl( \frac{t}{T} \biggr)^{\epsilon/\beta}\int_{0}^{T^{-1/\beta}} s^{\alpha+\beta} \Phi_{2}(s)\, \frac{ds}{s}
		+ CW_{\sigma}(u)\int_{T^{-1/\beta}}^{t^{-1/\beta}} s^{\alpha+\beta} \Phi_{2}(s)\, \frac{ds}{s}\\
	&\leq  CW_{\sigma}(u) \biggl( \frac{t}{T} \biggr)^{\epsilon/\beta}\int_{0}^{\infty} s^{\alpha+\beta} \Phi_{2}(s)\, \frac{ds}{s} + CW_{\sigma}(u)\delta
	\end{align*}
and $\lim_{t\downarrow0}B(t)$ follows.  Since it is established in  equation (4.17) of~\cite{GHL} that $\lim_{t\downarrow0} A(t)=0$ we conclude
\begin{equation*}
	\DF(u)
	= \lim_{t\downarrow0} \DF_{t}(u)
	= \lim_{t\downarrow0}  A(t)+B(t)\\
	=0.
	\end{equation*}
\end{proof}

\begin{rem}
A similar argument is used for a slightly different purpose in~\cite{P2}, and a slightly less general result with the same proof is in~\cite{P1}.  Nonetheless this specific result does not seem to be known, and the weaker result in part (3) of Theorem~\ref{thm:GHL} is frequently cited.
\end{rem}

\end{document}